\def\m{\phantom{-}}
\newcommand{\eg}{\emph{e.g.,\ }}
\newcommand{\ie}{\emph{i.e.,\ }}
\newtheorem{lemma}{Lemma}
\newtheorem{proposition}{Proposition}
\definecolor{kthgrey}{RGB}{227,229,227}
\DeclarePairedDelimiter\norm{\lVert}{\rVert}%
\let\oldnorm\norm
\def\norm{\@ifstar{\oldnorm}{\oldnorm*}}
\providecommand{\keywords}[1]{\textbf{Keywords:} #1}
\begin{document}
\title{On Dynamically Generating Relevant Elementary Flux Modes in a Metabolic Network using Optimization}
\author[1]{Hildur {\AE}sa Oddsd\'{o}ttir \thanks{Corresponding author: haodd@kth.se}}
\author[2]{Erika Hagrot}
\author[2]{V\'{e}ronique Chotteau }
\author[1]{Anders Forsgren}
\affil[1] {Department of Mathematics, Optimization and Systems Theory, 
KTH Royal Institute of Technology, 
SE-100 44 Stockholm, Sweden}
\affil[2]{Division of Industrial Biotechnology/Bioprocess Design, KTH Royal Institute of Technology, 
Albanova Center,
SE-106 91 Stockholm, Sweden}
\date{Final publication available at Springer via http://dx.doi.org/10.1007/s00285-014-0844-1}
\maketitle
\begin{abstract}
Elementary flux modes (EFMs) are pathways through a metabolic reaction network that connect external substrates to products. Using EFMs, a metabolic network can be transformed into its macroscopic counterpart, in which the internal metabolites have been eliminated and only external metabolites remain. In EFMs-based metabolic flux analysis (MFA) experimentally determined external fluxes are used to estimate the flux of each EFM. It is in general prohibitive to enumerate all EFMs for complex networks, since the number of EFMs increases rapidly with network complexity. In this work we present an optimization-based method that dynamically generates a subset of EFMs and solves the EFMs-based MFA problem simultaneously. The obtained subset contains EFMs that contribute to the optimal solution of the EFMs-based MFA problem. The usefulness of our method was examined in a case-study using data from a Chinese hamster ovary cell culture and two networks of varied complexity. It was demonstrated that the EFMs-based MFA problem could be solved at a low computational cost, even for the more complex network. Additionally, only a fraction of the total number of EFMs was needed to compute the optimal solution.
\let\thefootnote\relax\footnote{\emph{Abbreviations:} MFA, metabolic flux analysis; EFMs, elementary flux modes; CHO, Chinese hamster ovary; Lac, lactate; Glc, glucose}
\end{abstract}

\keywords{Metabolic Network; Optimization; Algorithm; Elementary Flux Mode; Metabolic Flux Analysis; Chinese Hamster Ovary Cell}
\section{Introduction}
Modeling the metabolism of a cell can give deeper understanding of how the cell responds to different conditions. The cells consume substrates from their environment. These are transformed into products, cell building blocks, and energy through biochemical reactions. The products are excreted, while the cell building blocks and energy are used for cell division.
A set of biochemical reactions can be used to represent cell metabolism by a metabolic reaction network; essentially a set of metabolites and the reactions interconnecting them. 
The stoichiometric coefficients are contained in the stoichiometric matrix ($A$), where rows correspond to metabolites and columns to reactions. The overall change in the concentration of each metabolite ($C$) can then be expressed as the product of the stoichiometric matrix and the flux vector ($v$) consisting of the fluxes on each reaction ($v_j$), 
\begin{equation*}
\frac{dC}{dt}=Av.
\end{equation*}
Several metabolites, called external metabolites ($C_x$), are present outside the cell and measured in experiments. The remaining metabolites, called internal metabolites ($C_i$), are present inside the cell and typically not measured.
The rows of $A$ can then be split into two sets, $A_i$ and $A_x$, depending on if a metabolite $C$ is an internal metabolite ($C_i$), or external metabolite ($C_x$), respectively. 
The assumption of pseudo-steady state is that
\begin{equation} \label{eq:pss}
\frac{dC_i}{dt}=A_iv=0.
\end{equation}
Metabolic flux analysis (MFA) and metabolic pathway analysis are two methodologies based on \eqref{eq:pss} \citep{Llaneras2008}. Flux vectors ($v$) that fulfill \eqref{eq:pss} are said to be in the flux space.

In MFA it is desirable to find the unique flux vector that satisfies $A_i v=0$, $A_x v=q_{ext}$, and $v \geq 0$, where $q_{ext}$ contains measurements of the external fluxes and $v \geq 0$ indicates that $v_j \geq 0, \, \forall j \in J$, where $J$ is the set of all reactions.
This problem as stated does in general not give a unique solution, hence assumptions are often made in order to find the flux \citep{Bonarius1997,Klamt2002}.

Metabolic pathway analysis has its basis in convex analysis \citep{Clarke1980}. The stoichiometry of the network is examined by defining a finite set of pathways that gives a basis for the flux space, \ie any feasible flux can be expressed as a non-negative linear combination of these pathways \citep{Schilling1999,Papin2003}. 
Various concepts that generate the flux space have been suggested, \eg extreme currents, elementary flux modes (EFMs), generating flux modes, and extreme pathways. For a comparison see \citep{Klamt2003,Papin2004,Planes2008,Rezola2011}. 

This paper will consider EFMs; a single EFM is denoted by the vector $e$ and the matrix $E$ contains the EFMs as columns. 
EFMs form a finite set of metabolic pathways that satisfy the pseudo-steady state assumption ($A_{i}e=0$) and use each reaction in the direction given, $e_j\geq 0$, $j\in J_{\text{irrev}}$, where $J_{\text{irrev}}$ is the set of irreversible reactions. Furthermore, each flux can be constructed from the set of EFMs, see Section \ref{sec:EFMs} for a more detailed description.
EFMs can be considered as pathways through the network connecting external substrates to products. 
Reversible fluxes are handled in a way that facilitates interpretation; simple inspection of EFMs 
can help answer questions such as which substrates and products are linked or which reactions are required for the production of a certain metabolite \citep{Llaneras2010}. 
EFMs can be enumerated through computer programs, \eg Metatool \citep{VonKamp2006}. 
Examples of applications using EFMs are: analyzing the metabolic pathways in a cell \citep{Schilling2000}; singular value decomposition analysis \citep{Price2003a}; and the development of dynamic macroscopic models \citep{Provost2006a}.

EFMs-based MFA combines the fields of metabolic flux analysis and metabolic pathway analysis. It provides a way to quantify the flux over each EFM. A macroscopic network ($A_xE$) is defined from the EFMs, in which each macroscopic reaction connects external substrates and products. The macroscopic network has a corresponding flux vector $w$, given by $v=Ew$, that consists of the fluxes over each macroscopic reaction. The EFMs-based MFA problem is to fit the macroscopic metabolic network to the cell specific external flux measurements ($q_{ext}$) by adjusting the macroscopic fluxes ($w$). The specific problem is given by, 
\begin{equation}
\begin{aligned}
\underset{w}{\text{minimize}} \quad & \frac{1}{2} \|q_{ext}-A_xE w \|_2^2,\\
\text{subject to }\quad & w \geq 0,
\end{aligned} \label{eq:EFMsbMFA1}
\end{equation}
where the flux on each EFM, $w$, is determined \citep[Chapter 5.2]{Provost2006}. 

As the network size increases the number of EFMs, or any other minimal description like extreme pathways, explodes and enumeration becomes prohibitive \citep{Klamt2002a}. 
In EFMs-based MFA the whole set of EFMs is enumerated beforehand, making its application limited to simplified networks.
To enable EFM analysis of complex metabolic networks, methods that find only subset of the EFMs have been developed.
The authors of \citep{DeFigueiredo2009} presented a method that finds EFMs through a mixed integer linear program. A predetermined number of EFMs that use the fewest reactions are identified, meaning that the EFMs using a high number of reactions may seldom be included in this set.
Another method proposed by \citep{Kaleta2009}, and later improved on by \citep{Tabe-Bordbar2013}, uses a sequence of linear optimization problems to find EFMs connected to specific external metabolites. Identifying EFMs through a series of linear programming (LP) problems was also proposed by \citep{Jungers2011}, where EFMs are identified in a random manner until the predetermined minimal number of EFMs are found. 
This method could be used to solve the EFMs-based MFA problem where multiple measurements are available; for that three steps need to be taken. First, the flux vector $v$ is identified through least-squares data fitting, then the method could be used to decompose $v$ into the EFMs needed, and finally, the macroscopic flux vector $w$ can be calculated. 

In this work we considered solving the EFMs-based MFA problem by a more integrated approach that does not require the enumeration of EFMs or estimation of the flux vector $v$ beforehand. Instead, we imagined that only the EFMs needed to solve the problem could be extracted by using all external measurements directly; this should enable the problem to be solved even when the network is so complex that total enumeration of EFMs is prohibitive. 
To achieve this we suggest a method that identifies EFMs in a dynamic fashion. The method is based on an optimization technique known as column generation, which enables EFMs to be identified in conjunction with solving the EFMs-based MFA problem. Column generation is an established technique within optimization \citep{Lubbecke2005} where two levels of optimization problems are solved iteratively, a master problem and a subproblem. In this work the master problem is a quadratic optimization problem on the same form as the EFMs-based MFA problem, however, only a subset of the EFMs is used. The subproblem is an LP problem that identifies EFMs that further improve the data fitting in the master problem. Hence only the most relevant EFMs with respect to external measurements should be found. 

The outline of the paper is as follows. In Section \ref{sec:th} we present the theory necessary to derive the proposed method. In Section \ref{sec:Method} we present the proposed method, show that it indeed finds EFMs of the network and explain how reversible reactions are considered. The usefulness of our approach is demonstrated in Section \ref{sec:CS} by using data obtained in Chinese hamster ovary (CHO) cell culture. Finally, Section \ref{sec:conc} gives our conclusions.
\section{Theory}\label{sec:th}
\subsection{Metabolic Network Cones}\label{sec:Mcones} 
To simplify the exposition, the metabolic network is initially assumed to only have irreversible reactions, reversible reactions are studied in Section \ref{sc:irrev}. The pseudo-steady state assumption from \eqref{eq:pss} together with the irreversibility of each reaction gives the flux space. The flux space, or cone, is the set of vectors $v$ that satisfy, 
\begin{equation}\label{eq:Cone}
\hat{A}v=\begin{bmatrix}
\m A_{i}\\
-A_{i}\\
-I
\end{bmatrix}v \leq \begin{bmatrix} 0\\0\\0
\end{bmatrix}, 
\end{equation}
where $I$ is the identity matrix. 

The cone given by \eqref{eq:Cone} has full column rank and hence has the $0$ vector as an extreme point. If there are any other points in the cone those are a part of a ray \citep[Part I.4 Definition 4.2]{Nemhauser1999}. 
An extreme ray is a ray in the cone that cannot be written as a non-negative linear combination of two other rays in the cone. In this sense the extreme rays define the cone, since any ray in the cone can be written as a non-negative linear combination of the extreme rays. Hence, if all the extreme rays are gathered as columns in a matrix $R$, any ray $r$ can be written as $r=R\gamma$, where $\gamma \geq0$ \citep[Part I.4 Theorem 4.8]{Nemhauser1999}. 
\subsection{Elementary Flux Modes} \label{sec:EFMs}
An elementary flux mode (EFM) is defined by \citet{Klamt2002a} as a flux vector that satisfies the pseudo-steady state assumption, is nondecomposable, and includes only a minimal set of reactions. EFMs are closely related to the concept of extreme rays. All flux vectors ($v$) in the network can be written as a non-negative linear combination of the EFMs, 
\begin{equation}
v= \sum_{l=1}^L \gamma_l e_l =E\gamma, \qquad \gamma \geq 0,
\label{eq:combin}
\end{equation} 
where $e_l$ is an EFM and $E$ is a matrix with the EFMs as columns.
A formal definition of an EFM is given by \citet{Schuster1994}.
For a metabolic network with only irreversible reactions there is a simple interpretation of the EFM's definition in mathematical terms.
\begin{lemma}[\cite{Gagneur2004}, Lemma 1] \label{lem:EREFM}
When a metabolic network only has irreversible reactions the EFMs and the extreme rays of the cone \eqref{eq:Cone} are equal.
\end{lemma}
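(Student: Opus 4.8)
The plan is to first rewrite the cone \eqref{eq:Cone} in a transparent form and then prove the two set inclusions separately, exploiting throughout that every vector in the cone is non-negative. The paired inequalities $A_i v \le 0$ and $-A_i v \le 0$ together force $A_i v = 0$, while $-I v \le 0$ is just $v \ge 0$; hence the cone is
\begin{equation*}
C = \{\, v : A_i v = 0,\ v \ge 0 \,\}.
\end{equation*}
With only irreversible reactions the defining properties of an EFM reduce to: $e \in C$, $e \ne 0$, and $e$ has \emph{minimal support}, meaning there is no nonzero $v \in C$ with $\operatorname{supp}(v) \subsetneq \operatorname{supp}(e)$. I would adopt this as the working characterisation of an EFM and show it coincides with being an extreme ray of $C$.

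For the direction ``extreme ray $\Rightarrow$ EFM'' I would argue by contraposition. Suppose $r \in C$, $r \ne 0$, fails to have minimal support, so some nonzero $v' \in C$ satisfies $\operatorname{supp}(v') \subsetneq \operatorname{supp}(r)$. Setting $\lambda = \min_{j : v'_j > 0} r_j / v'_j$, which is strictly positive because $\operatorname{supp}(v') \subseteq \operatorname{supp}(r)$, yields $r - \lambda v' \ge 0$ with at least one additional zero coordinate. Both $\lambda v'$ and $r - \lambda v'$ then lie in $C$ (the equality $A_i(\cdot) = 0$ is preserved), and neither is proportional to $r$: one has strictly smaller support and the other misses the coordinate just zeroed. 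The decomposition $r = \lambda v' + (r - \lambda v')$ therefore exhibits $r$ as a non-negative combination of two other rays, so $r$ is not an extreme ray.

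For ``EFM $\Rightarrow$ extreme ray'' let $e$ be an EFM with support $S$. The key step is to show every nonzero $u \in C$ with $\operatorname{supp}(u) \subseteq S$ is a positive multiple of $e$: minimality forces $\operatorname{supp}(u) = S$, and choosing $\mu = \min_{j \in S} e_j / u_j$ makes $e - \mu u \in C$ with strictly smaller support, which by minimality must be the zero vector, so $e = \mu u$. Now if $e = r_1 + r_2$ with $r_1, r_2 \in C$, non-negativity forces $\operatorname{supp}(r_k) \subseteq S$, so each nonzero $r_k$ is a non-negative multiple of $e$; hence $e$ is an extreme ray. The two inclusions give the claimed equality.

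The step I expect to carry the real weight is the uniqueness-up-to-scaling argument in the third paragraph: it is exactly here that irreversibility ($v \ge 0$) is indispensable, since it guarantees that the supports of summands stay nested inside $\operatorname{supp}(e)$ and that one may subtract a scaled copy while remaining feasible. Without the sign constraints a minimal-support vector need not be an extreme ray, which is why the lemma is restricted to irreversible networks; ensuring the proof visibly uses non-negativity at each subtraction is the main thing to get right. A secondary point worth stating cleanly is the reconciliation of the biochemical EFM definition (nondecomposability together with a minimal reaction set) with the purely combinatorial minimal-support condition used above.
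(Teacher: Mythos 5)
Your proof is correct, but note that the paper contains no proof of this lemma to compare against: the result is imported wholesale by citation from \citep{Gagneur2004}, Lemma~1, and used as a black box. Your argument is therefore a self-contained derivation along the classical lines. Concretely, you rewrite the cone \eqref{eq:Cone} as $\{v : A_i v = 0,\ v \geq 0\}$, take support-minimality as the working definition of an EFM, and establish both inclusions by the standard scaling-and-subtraction device: subtracting $\lambda v'$ (with $\lambda$ the minimum ratio over the support of $v'$) stays feasible precisely because of $v \geq 0$, and non-negativity likewise forces the supports of any summands of $e$ to nest inside $\operatorname{supp}(e)$. Both directions check out, including the non-proportionality of the two pieces in the contrapositive direction (the subtracted vector acquires a zero where $r$ is positive, and it cannot vanish since that would force equal supports) and the uniqueness-up-to-scaling claim in the converse. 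What your proof buys is self-containment and a visible display of exactly where irreversibility is indispensable --- the two places you identify --- whereas the paper's citation buys brevity and offloads the bookkeeping to the literature. The one soft spot is the definitional alignment you flag but defer: the paper defines an EFM (following Klamt and Stelling) by pseudo-steady state, nondecomposability, and a minimal reaction set, and your argument silently replaces this with the minimal-support condition. In the irreversible case this is the standard formalization, going back to \citep{Schuster1994}, so it is a matter of stating an equivalence rather than a genuine gap, but a complete write-up should make that identification explicit instead of leaving it as a closing remark.
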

Hence a method that finds the extreme rays of the cone given by \eqref{eq:Cone} also finds the EFMs. 
When there are reversible reactions it has been shown that finding all the EFMs is equivalent to finding all the extreme rays of a cone in an extended space \citep{Gagneur2004,Urbanczik2005}. Therefore, it can be assumed without loss of generality that the metabolic network has only irreversible reactions. 
\subsection{The EFMs-based MFA problem}
In the EFMs-based MFA problem, macroscopic fluxes ($w$) are sought. The macroscopic fluxes ($w$) then correspond to the weights $\gamma$ in \eqref{eq:combin} and each flux vector ($v$) can be written as a non-negative linear combination of the EFMs,
\begin{equation}
v= \overset{L}{\underset{l=1}{\sum}} w_l e_l =Ew, \; w \geq 0. 
\label{eq:vdec}
\end{equation}
Using the decomposition given by \eqref{eq:vdec}, the external measurements ($q_{ext}$) can be linked to the macroscopic fluxes $w$. 
In $A_xv \approx q_{ext}$, $v$ can be substituted with the representation from \eqref{eq:vdec} giving,
\begin{equation*}
A_xv=A_xEw \approx q_{ext}.
\end{equation*}
Similarly, the internal stoichiometry ($A_i$) cancels out, 
\begin{equation*}
A_{i}v=A_{i}Ew =0, \quad \text{since } A_{i}E=0, \text{ by def. of E.}
\end{equation*}

In EFMs-based MFA, $w$ is obtained through least-squares regression as shown by \eqref{eq:EFMsbMFA1}. When external measurements of multiple repetitions of an experiment are taken into account and arranged in a vector $Q$, the least-squares regression to approximate $w$ is to 
\begin{equation}\label{eq:simpTot}
\begin{aligned}
\underset{w}{\text{minimize}} \quad & \frac{1}{2} \|Q-\mathcal{I}A_xE w \|_2^2,\\
\text{subject to } \quad 
& w \geq 0.
\end{aligned}
\end{equation}
More precisely, if $q_{ext,k}$ are results from one repetition, $k$, then $Q^T= [q_{ext,1}^T, \ldots q_{ext,d}^T]$, where $d$ denotes the number of repetitions.
$\mathcal{I}$ is a stacked identity matrix consisting of $d$ identity matrices of size $M_{ext}$ (number of external metabolites) or $\mathcal{I}=[I_{M_{ext}}, \ldots, I_{M_{ext}}]^T$, where $I_{M_{ext}}$ is repeated $d$ times.
\section{The Proposed Method}\label{sec:Method}
Problem \eqref{eq:simpTot} assumes that the whole set of EFMs $E$ is known. We suggest a method to solve the problem when $E$ is not known beforehand. The columns of $E$ needed to minimize the problem are generated dynamically using a column generation technique. 
Two problems, the master problem (MP) and the subproblem (SP), are solved iteratively. 
One iteration involves obtaining the optimal solution vector $w_B^*$ from the master problem. Using that solution the subproblem can be stated and solved. The subproblem either finds a new EFM that improves the objective of the master problem or indicates that the solution of the master problem cannot be improved through the addition of more EFMs. If an improving EFM is found, that EFM is added to $E_B$ and the master problem is solved again. The master and subproblem will be introduced in the following discussion, while further derivation of the optimization problems can be found in Section \ref{sc:MPSP}.

The master problem is a linear least-squares data fitting problem, where the external measurements ($Q$) are fitted to the known EFMs ($E_B$) by adjusting the macroscopic fluxes ($w_B$) from the optimization problem
\begin{equation}\label{eq:MP}\tag{MP}
\begin{aligned}
\underset{w_B}{\text{minimize}} \quad & \frac{1}{2} \|Q-\mathcal{I}A_xE_B w_B\|_2^2,\\
\text{subject to } \quad & w_B \geq 0,
\end{aligned}
\end{equation}
where the subscript $B$ denotes the subset of EFMs included.

The subproblem uses the optimal solution $w_B^*$ from the master problem to find, if possible, a new pathway $e$ that can further improve the solution of the master problem, \ie decrease the norm of \eqref{eq:MP}.
\begin{equation}\label{eq:SP} \tag{SP}
\begin{aligned}
\underset{e}{\text{minimize}} \quad & c^Te,\\
\text{subject to } \quad 
& A_{i}e=0, \;
\mathbf{1}^Te \leq 1, \;
e \geq 0, 
\end{aligned}
\end{equation}
where $c= A_x^T\mathcal{I}^T (\mathcal{I}A_xE_B w_B^* -Q)$ and $\mathbf{1}$ is a vector consisting only of ones. If the optimal value is negative then the optimal EFM, $e^*$, found should be included in the master problem. If the optimal value is zero, then $E_B$ includes all EFMs that are needed to minimize \eqref{eq:simpTot} and the problem has been solved.

To simplify the exposition, the two optimization problems are described for an irreversible metabolic network, \ie where no reaction is reversible. Reversible reactions can be handled in the same framework, as outlined in Section \ref{sc:irrev}.

\subsection{Deriving the Master and Subproblem}\label{sc:MPSP}
We first describe how the master problem is formed. 
Assume that \eqref{eq:simpTot} has been solved for a known subset of the columns of $E$ where the $w$ components of the unknown columns of $E$ are fixed to $0$. 
This means that if $E$ and $w$ are split into the known subset, index $B$, and the unknown subset, index $N$, $E=[E_B \, E_N]$, $w^T=[w_B^T \, w_{N}^T]$, then $w_N=0$ is fixed. 
The resulting problem is the master problem \eqref{eq:MP}.

The derivation of the subproblem is done through considering the optimality conditions of \eqref{eq:simpTot} and \eqref{eq:MP}. When \eqref{eq:simpTot} has been solved to optimum the solution, $w^*$, satisfies the following optimality conditions (see \eg \citet[Chapter 14.5]{Griva2009}), 
\begin{equation}\label{eq:Simpredoptcond}
\begin{aligned}
& w^* \geq 0, \quad
\lambda \geq 0, \quad
\lambda^Tw^*=0, \\
& E^T A_x^T \mathcal{I}^T(\mathcal{I}A_xEw^*-Q) =\lambda.
\end{aligned}
\end{equation}
The optimality conditions of \eqref{eq:simpTot}, given by \eqref{eq:Simpredoptcond}, are both necessary and sufficient for a globally optimal solution, since it is a quadratic programming problem. This implies that any $w$ satisfying the optimality conditions \eqref{eq:Simpredoptcond} is globally optimal to \eqref{eq:simpTot}.
The master problem \eqref{eq:MP} satisfies the same optimality conditions as given by \eqref{eq:Simpredoptcond} where $w=w_B$ with two additional constraints, namely \eqref{eq:wN0} and \eqref{eq:OptFSubp},
\begin{subequations}\label{eq:OptCSubpall}
\begin{align}
& w^*_B \geq 0, \quad \lambda_B \geq 0, \quad
\lambda_B^Tw^*_B=0,\\
& w^*_N=0, \label{eq:wN0} \\
& E_B^T A_x^T \mathcal{I}^T(\mathcal{I}A_xE_Bw^*_B-Q) =\lambda_B.\\
& E_N^T A_x^T\mathcal{I}^T (\mathcal{I}A_xE_B w^*_B -Q)=\lambda_N. \label{eq:OptFSubp}
\end{align}
\end{subequations}
When \eqref{eq:Simpredoptcond} and \eqref{eq:OptCSubpall} are compared, it can be noted that if $\lambda_N \geq 0$ in \eqref{eq:OptFSubp} then the conditions of \eqref{eq:Simpredoptcond}, are also fulfilled, \ie the optimal solution of \eqref{eq:MP} is optimal to \eqref{eq:simpTot}.
However, if there is an element in $\lambda_N$ that is negative then the solution of the master problem \eqref{eq:MP} is not optimal to the full problem \eqref{eq:simpTot}.
Instead of enumerating all $E_N$, an optimization problem that finds a negative $\lambda_N$ could be solved.
For that reason \eqref{eq:OptFSubp} defines the objective function of the subproblem, \ie the function to be minimized in order to find a new EFM. Using a linear objective the subproblem can be stated in a somewhat abstract form, 
\begin{equation}\label{eq:SPabs}
\begin{aligned}
\underset{e}{\text{minimize}} \quad & c^Te,\\
\text{subject to } \quad & 
e \in E, 
\end{aligned}
\end{equation}
where $c=A_x^T\mathcal{I}^T (\mathcal{I}A_xE_B w^*_B -Q)$.
Here the constraints are only given in the form that $e$ should be included in the set of EFMs. 
For any $e^*$ optimal to \eqref{eq:SPabs} with a negative objective value it holds that $e^*\in E_N$, since all $e \in E_B$ satisfy \eqref{eq:Simpredoptcond}. 
Furthermore, if $c^Te^* = 0$ then there is no $\lambda_N<0$ in \eqref{eq:OptFSubp} and the solution of \eqref{eq:MP} is a globally optimal solution to the full problem \eqref{eq:simpTot}.

In order to solve \eqref{eq:SPabs} the constraints need to be explicitly stated. The EFMs should satisfy the pseudo-steady state assumption and that all reactions are irreversible,
\begin{equation}\label{eq:ConstSubp}
\begin{aligned}
A_ie=0, \quad &
e \geq 0.
\end{aligned}
\end{equation}
The constraints given by \eqref{eq:ConstSubp} define a cone. 
Hence any vector $e$ that satisfies the constraints \eqref{eq:ConstSubp} and gives a negative objective can decrease indefinitely, \ie the LP with only the constraints given by \eqref{eq:ConstSubp} will be unbounded below and cannot be solved. 
Hence a constraint that bounds the problem is added. Any single constraint that is guaranteed to be active at optimality and bounds the problem can be used. In this formulation bounding the 1-norm of $e$, or 
\begin{equation}\label{eq:ConstSubpBound}
\mathbf{1}^T e \leq 1,
\end{equation}
was chosen. Giving the objective function a lower bound will however deliver similar results.
The subproblem is thus, to minimize the objective function from \eqref{eq:SPabs} subject to the constraints given by \eqref{eq:ConstSubp} and \eqref{eq:ConstSubpBound}. 

\subsection{The Subproblem and EFMs}
In this section we will show that an $e^*$ optimal to the subproblem gives an EFM. It should be noted that a similar optimization problem has been used previously to find EFMs \citep{Acuna2009}. We will however give a full description for the sake of completeness.
The flux cone and a flux polyhedron will be used. 
The flux cone, now stated again was previously given by \eqref{eq:Cone},
\begin{equation}\label{eq:PolyP} \tag{FC}
\begin{aligned} \{e: &&
A_{i}e \leq 0, &&
-A_{i}e \leq 0, &&
-e \leq 0 \}. &&
\end{aligned}
\end{equation}
We define \eqref{eq:PolyP1} as the polyhedron arising when the cone \eqref{eq:PolyP} has been limited with respect to the 1-norm of $e$.
\begin{equation}\label{eq:PolyP1}\tag{FP}
\begin{aligned} \{e: &&
A_{i}e \leq 0, &&
-A_{i}e \leq 0, &&
-e \leq 0, &&
\mathbf{1}^Te \leq 1\}.
\end{aligned}
\end{equation}
Note that \eqref{eq:PolyP1} defines the feasible region of the subproblem \eqref{eq:SP}.
A polyhedron $P=\{v \in \mathcal{R}^n: Dv\leq b\}$ is guaranteed to have an optimal extreme point solution for a given linear objective function under certain conditions.
\begin{proposition}[\cite{Nemhauser1999}, Part I.4 Theorem 4.5] \label{thm:extrp}
If $P \neq \emptyset$, $rank(D)=n$, and $\max \{c^Tv:v\in P\}$ is finite, then there is an optimal solution that is an extreme point.
\end{proposition}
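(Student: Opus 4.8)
The plan is to combine the algebraic characterization of extreme points with a constraint-activation argument that ``walks'' a feasible point to a vertex without decreasing the objective. Recall first the characterization I would rely on: a point $\bar v\in P$ is an extreme point of $P=\{v\in\mathcal{R}^n:Dv\le b\}$ if and only if the submatrix $D_{\bar v}$ formed by the rows $D_i$ that are active at $\bar v$ (those with $D_i\bar v=b_i$) has rank $n$. One direction is immediate---if $\operatorname{rank}(D_{\bar v})<n$ there is a nonzero $d$ in its null space, and then $\bar v\pm\varepsilon d\in P$ for small $\varepsilon>0$, exhibiting $\bar v$ as the midpoint of a segment in $P$---and the converse follows because $D_{\bar v}v=b_{\bar v}$ then has $\bar v$ as its unique solution. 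A useful byproduct is that $P$ has only finitely many extreme points, since each is determined by selecting $n$ linearly independent rows of $D$ and solving the resulting square system.

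The core step I would carry out is the following. Take any feasible $\bar v$ that is not an extreme point. By the characterization there is $d\ne 0$ with $D_i d=0$ for every active row $i$; since moving along $\pm d$ keeps the active constraints active and, for small steps, the inactive ones satisfied, both $\bar v+td$ and $\bar v-td$ are feasible near $t=0$. Replacing $d$ by $-d$ if necessary I may assume $c^Td\ge 0$. Now I increase $t$ from $0$: because $\operatorname{rank}(D)=n$ we have $Dd\ne 0$, so some previously inactive row satisfies $D_id>0$ and limits $t$, and finiteness of $\max\{c^Tv:v\in P\}$ guarantees that in the case $c^Td>0$ we cannot march to infinity, so again a constraint must stop us. At the blocking value $t^\ast$ a new constraint becomes active, and this new row is linearly independent of the previously active ones (it is nonorthogonal to $d$ while they are orthogonal to $d$), so the rank of the active set strictly increases while the objective does not decrease. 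Iterating at most $n$ times produces an extreme point $\hat v$ with $c^T\hat v\ge c^T\bar v$.

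Finally I would assemble the conclusion. Since $P$ is nonempty the walking step applies to some feasible point and terminates at an extreme point; since there are only finitely many extreme points $v^1,\dots,v^p$, the quantity $\max_j c^Tv^j$ is well defined and attained. The walking step shows that every feasible $\bar v$ satisfies $c^T\bar v\le c^T\hat v\le\max_j c^Tv^j$, so this maximum over vertices equals $\sup\{c^Tv:v\in P\}$ and is therefore an optimal solution that is an extreme point. The main obstacle I anticipate is the blocking-step analysis in the second paragraph: one must argue carefully that a constraint really does stop the motion (invoking boundedness when $c^Td>0$ and $\operatorname{rank}(D)=n$ when $c^Td=0$) and that the newly activated constraint is linearly independent of the old active set, so that the rank genuinely increases and the procedure terminates. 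The full-rank hypothesis $\operatorname{rank}(D)=n$ is exactly what rules out a nonzero direction lying in the null space of all of $D$ (along which one could never become blocked), and is what makes the existence of an extreme point---and hence of the attained optimum---possible.
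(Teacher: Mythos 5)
The paper itself does not prove this proposition: it is quoted from Nemhauser and Wolsey (Part I.4, Theorem 4.5) and used as a black box in the proof of Proposition~\ref{thm:mpexr}, so your argument can only be judged on its own merits. Its architecture is the standard one and is sound: characterize extreme points by the rank of the active-constraint submatrix, walk any feasible point along a null-space direction of the active rows so that the objective does not decrease while the active-set rank strictly increases, and conclude using the finiteness of the set of extreme points.

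There is, however, one inference in your walking step that is false as written and on which the procedure can genuinely get stuck. You claim that since $\operatorname{rank}(D)=n$ we have $Dd\neq 0$, ``so some previously inactive row satisfies $D_id>0$ and limits $t$.'' What $Dd\neq 0$ gives is a row with $D_id\neq 0$; the sign is not controlled. If every nonzero entry of $Dd$ is negative, then the ray $\{\bar v+td: t\geq 0\}$ lies entirely in $P$ and no constraint ever blocks the motion. This matters exactly in the subcase $c^Td=0$, where boundedness of the objective gives no contradiction either, because the objective is constant along the ray; note that your earlier normalization (replace $d$ by $-d$ to get $c^Td\geq 0$) has already been spent and does not by itself rule this situation out. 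The repair is local but must be made explicit: if $c^Td>0$, an unblocked ray would contradict finiteness of $\max\{c^Tv:v\in P\}$, so a blocking row with $D_id>0$ exists; if $c^Td=0$, then both $d$ and $-d$ satisfy the normalization, and since $Dd\neq 0$ at least one of $Dd$, $D(-d)$ has a strictly positive entry, so replace $d$ by $-d$ if necessary to obtain a blocking row. Your closing paragraph gestures at this case split (``boundedness when $c^Td>0$, rank when $c^Td=0$'') but still omits the sign flip, which is the actual content of the $c^Td=0$ case: rank alone does not stop motion along the originally chosen direction. With this repair, the remainder of your argument---the newly activated row is linearly independent of the old active rows because it is not orthogonal to $d$ while they are, the rank increases at each step, the walk terminates in at most $n$ steps, and the finitely many extreme points then contain an optimizer---is correct and the proposition follows.
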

The following proposition ensures that the subproblem finds extreme rays of the cone.
\begin{proposition}\label{thm:mpexr}
The subproblem \eqref{eq:SP} has at least one optimal extreme point. If the optimal value of \eqref{eq:SP} is negative, then an optimal extreme point of \eqref{eq:SP} corresponds to an extreme ray of the flux cone \eqref{eq:PolyP}.
\end{proposition}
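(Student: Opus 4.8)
The plan is to establish the two assertions separately: the existence of an optimal extreme point follows from Proposition~\ref{thm:extrp}, while the correspondence to an extreme ray is obtained by a normalization argument combined with contradiction.

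\textbf{Existence of an optimal extreme point.} First I would recast \eqref{eq:SP} as the maximization $\max\{-c^Te:e\in\eqref{eq:PolyP1}\}$ so that Proposition~\ref{thm:extrp} applies directly, and then verify its three hypotheses for the feasible set \eqref{eq:PolyP1}. Non-emptiness is immediate, since $e=0$ satisfies $A_ie=0$, $e\geq 0$, and $\mathbf{1}^Te=0\leq 1$. The constraint matrix contains the block $-I$ arising from $-e\leq 0$, so it has full column rank $n$. Finiteness holds because \eqref{eq:PolyP1} is bounded: the constraints $e\geq 0$ and $\mathbf{1}^Te\leq 1$ force $0\leq e_j\leq 1$ for every $j$, so the linear objective attains its optimum on a compact set. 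Proposition~\ref{thm:extrp} then guarantees an optimal extreme point.

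\textbf{Activeness of the bound.} Now suppose $e^*$ is an optimal extreme point with $c^Te^*<0$; note $e^*\neq 0$ since $c^T0=0$. I would first show the bounding constraint is tight, $\mathbf{1}^Te^*=1$. If instead $\mathbf{1}^Te^*<1$, then for $t>1$ close enough to $1$ the scaled vector $te^*$ still lies in \eqref{eq:PolyP1}, because $A_i(te^*)=0$, $te^*\geq 0$, and $\mathbf{1}^T(te^*)=t\,\mathbf{1}^Te^*\leq 1$, while $c^T(te^*)=t\,c^Te^*<c^Te^*$. This contradicts optimality, so $\mathbf{1}^Te^*=1$, which also confirms the earlier claim that \eqref{eq:ConstSubpBound} is active at optimality. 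In particular $e^*$ lies in the cone \eqref{eq:PolyP}.

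\textbf{Extremality of the ray.} Finally I would show that the ray $\{te^*:t\geq 0\}$ is an extreme ray of \eqref{eq:PolyP}, arguing by contradiction. If it were not extreme, then $e^*=r_1+r_2$ with $r_1,r_2$ nonzero elements of \eqref{eq:PolyP}, neither a nonnegative multiple of $e^*$. Since $r_i\geq 0$ and $r_i\neq 0$ we have $\mathbf{1}^Tr_i>0$, so I can normalize $\hat r_i=r_i/(\mathbf{1}^Tr_i)$, which lies in \eqref{eq:PolyP1} on the hyperplane $\mathbf{1}^Te=1$. Writing $e^*=(\mathbf{1}^Tr_1)\hat r_1+(\mathbf{1}^Tr_2)\hat r_2$ and using $\mathbf{1}^Te^*=1$, the two coefficients are positive and sum to $1$, so $e^*$ is a strict convex combination of $\hat r_1$ and $\hat r_2$. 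These points are distinct, since $\hat r_1=\hat r_2$ would force $e^*=\hat r_1$ and hence make $r_1$ a multiple of $e^*$. This contradicts $e^*$ being an extreme point of \eqref{eq:PolyP1}, so the ray through $e^*$ is extreme, and by Lemma~\ref{lem:EREFM} it is an EFM.

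\textbf{Main obstacle.} The delicate step is the last one: translating ``extreme point of the truncated polyhedron'' into ``extreme ray of the cone.'' The normalization is what makes this work, and it hinges on $\mathbf{1}^Tr_i>0$, which is why the nonnegativity $e\geq 0$ together with the $1$-norm bound is essential. As a cross-check one could instead argue by ranks: at $e^*$ the active constraints of \eqref{eq:PolyP} have $e^*$ in their kernel, so their rank is at most $n-1$; adjoining the active bound $\mathbf{1}^Te=1$ raises the rank to $n$ because $e^*$ is an extreme point of \eqref{eq:PolyP1}, forcing the cone's active constraints to have rank exactly $n-1$, which is precisely the condition characterizing an extreme ray.
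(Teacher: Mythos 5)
Your proof is correct, and for the decisive step it takes a genuinely different route from the paper. The first part (verifying the hypotheses of Proposition~\ref{thm:extrp}) and the scaling argument forcing $\mathbf{1}^Te^*=1$ are the same as in the paper. For the translation from ``optimal extreme point of \eqref{eq:PolyP1}'' to ``extreme ray of \eqref{eq:PolyP}'', the paper counts active constraints: an extreme point of \eqref{eq:PolyP1} has $n$ linearly independent active constraints (Propositions 2.4 and 4.2 of Nemhauser--Wolsey, Part I.4), and since $\mathbf{1}^Te\leq 1$ is the only constraint of \eqref{eq:PolyP1} absent from \eqref{eq:PolyP}, there remain $n-1$ linearly independent active constraints of the cone at $e^*$, which is precisely the algebraic characterization of an extreme ray (Proposition 4.3 there). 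You instead argue directly from the decomposition definitions: negate extremality of the ray through $e^*$, normalize the two summands onto the slice $\mathbf{1}^Te=1$ (legitimate because $r_i\geq 0$, $r_i\neq 0$ gives $\mathbf{1}^Tr_i>0$), and exhibit $e^*$ as a strict convex combination of two distinct points of \eqref{eq:PolyP1}, contradicting its being an extreme point. Your route is more elementary and self-contained: it uses only the definition of extreme ray that the paper itself states in Section~\ref{sec:Mcones}, with no rank machinery, at the cost of a slightly longer argument; the paper's route is shorter once the Nemhauser--Wolsey characterizations are invoked, and it makes the ``$n-1$ active constraints'' structure explicit. Your closing ``cross-check'' paragraph is, in essence, exactly the paper's proof, so you have in effect supplied both arguments.
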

\begin{proof}Because of the constraints $\mathbf{1^T}e \leq 1$ and $e \geq 0$ problem \eqref{eq:SP} is finite. Furthermore, the corresponding constraint matrix, given by \eqref{eq:PolyP1}, has full column rank because of $e \geq 0$. 
Hence by Proposition \ref{thm:extrp} there is an optimal solution to \eqref{eq:SP} that is an extreme point of \eqref{eq:PolyP1}.
For the remainder of the proof assume $e^*$ is an optimal extreme point with $c^Te^*<0$.
From Propositions 2.4, and 4.2 in \citet[Part I.4]{Nemhauser1999} an extreme point has $n$ linearly independent active constraints, \ie constraints satisfied with an equality. When the objective function value $c^Te^*$ is negative $\mathbf{1^T}e^* \leq 1$ is one of the active constraints in \eqref{eq:PolyP1}. 
This is since 
any $\tilde{e} \in$ \eqref{eq:PolyP1} such that $\mathbf{1^T}\tilde{e} < 1$ and with $c^T\tilde{e}<0$ can always be multiplied by an $\alpha >1$ so $\alpha \mathbf{1^T}\tilde{e} = 1$ and $\alpha c^T\tilde{e}<c^T\tilde{e}$.
Since $\mathbf{1^T}e \leq 1$ is the only constraint in \eqref{eq:PolyP1} that is not in \eqref{eq:PolyP} there are $n-1$ linearly independent constraints in \eqref{eq:PolyP} that are active for $e^*$. By Proposition 2.4 and 4.3 in \citep[Part I.4]{Nemhauser1999} a point in a cone is an extreme ray if and only if it has $n-1$ independent active constraints in the cone, hence $e^*$ is an extreme ray of \eqref{eq:PolyP}. 
\qed
\end{proof}
Hence, any method that generates extreme points of \eqref{eq:PolyP1}, \eg the simplex method, generates extreme rays of \eqref{eq:PolyP}. Then by Lemma \ref{lem:EREFM} the extreme rays of \eqref{eq:PolyP} are equal to the EFMs when the metabolic network only has irreversible reactions.
\subsection{Handling Reversible Reactions} \label{sc:irrev}
Thus far all metabolic networks were assumed to be irreversible. Networks with reversible reactions can be handled by the proposed method in Section \ref{sec:Method}, provided that a pre- and post-processing step is added. The pre-processing step involves moving to an extended space, where all reactions are irreversible, and the proposed method can be applied to find extreme rays of the cone in the extended space. The post-processing step maps these extreme rays back to the original space where they are EFMs. These steps have been described by \citet{Gagneur2004} and \citet{Urbanczik2005}.

The pre-processing step consists of splitting each reversible reaction into two irreversible reactions one in each direction. 
This can be considered as moving into an extended space, where the columns of $A_i$ and $A_x$ are split into reactions corresponding to reversible reactions ($A_i^{\text{rev}}$, $A_x^{\text{rev}}$) and irreversible reactions ($A_i^{\text{irrev}}$, $A_x^{\text{irrev}}$). Then an extended stoichiometric matrix $\tilde{A}$ is defined,
\begin{align*}
\tilde{A}_x=[A_x,\; \text{-}A_x^{\text{rev}}],\qquad
\tilde{A}_i=[A_i,\; \text{-}A_i^{\text{rev}}].
\end{align*}
In this extended space there is a correspondingly longer flux vector $\tilde{v}$ that consists of only irreversible reactions, or one directional edges in the network graph. 
The resulting cone,
\begin{equation*}
\{\, \tilde{v}: \; \tilde{A}_i \tilde{v} \leq 0, \; -\tilde{A}_i \tilde{v} \leq 0, \; -I \tilde{v} \leq 0 \,\}
\end{equation*}
has extreme rays. The master and subproblem combination can be applied to find extreme rays in this extended space. 

The post-processing step maps the extreme rays found in the extended space back by,
\begin{equation*}
\begin{aligned}
&v_j=\tilde{v}_j, \; \text{ if } j \in J_{\text{irrev}}, \quad \text{and} \\ &
v_j=\tilde{v}_j-\tilde{v}_{j'}, \; \text{ if } j \notin J_{\text{irrev}},
\end{aligned}
\end{equation*}
where $\tilde{v}_{j'}$ corresponds to the other direction of $\tilde{v}_j$. Then $v$ is either
\begin{enumerate}
\item equal to an EFM of the network, or 
\item an unimportant cycle in the extended network where the split reversible reaction is traversed in both forward and backwards direction \citep{Gagneur2004}.
\end{enumerate} 
In general the unimportant cycles will not be found using the master and subproblem combination, they do not give any improvement in the data fitting.
\section{Case-Study: Cultivation of CHO Cells} \label{sec:CS}
\subsection{Data}
\begin{table*}[t]
\centering
\scriptsize
\rowcolors{1}{white}{kthgrey}
\begin{tabular}{l|rrrrrr}
Metabolite & $q_{ext,6}$&$q_{ext,7}$&$q_{ext,8}$&$q_{ext,9}$&$q_{ext,10}$&$q_{ext,11}$ \\ \hline
Ala & 0.495 & 0.408 & 0.404 & 0.442 & 0.394 & 0.363\\
Arg & -0.307 & -0.251 & -0.121 & -0.241 & -0.187 & -0.309\\
Asn & -0.181 & -0.153 & -0.131 & -0.168 & -0.166 & -0.152\\
Asp & 0.053 & 0.036 & 0.054 & 0.062 & 0.065 & 0.043\\
Biomass & 0.633 & 0.536 & 0.602 & 0.669 & 0.612 & 0.615\\
Cys & -0.089 & -0.069 & -0.062 & -0.072 & -0.062 & -0.065\\
Glucose (Glc) & -4.454 & -3.114 & -3.037 & -3.721 & -3.591 & -2.624\\
Gln & -2.195 & -2.094 & -1.720 & -2.206 & -2.031 & -1.857\\
Glu & 0.285 & 0.298 & 0.257 & 0.286 & 0.310 & 0.272\\
Gly & 0.100 & 0.088 & 0.086 & 0.103 & 0.111 & 0.086\\
His & -0.062 & -0.064 & Na & Na & Na & Na\\
Ile & -0.120 & -0.107 & -0.084 & -0.110 & -0.119 & -0.091\\
Lactate (Lac) & 6.520 & 6.400 & 5.631 & 6.743 & 7.088 & 5.927\\
Leu & -0.204 & -0.179 & -0.145 & -0.195 & -0.204 & -0.150\\
Lys & -0.054 & -0.070 & -0.030 & -0.038 & -0.076 & -0.052\\
Met & -0.053 & -0.054 & -0.039 & -0.051 & -0.047 & -0.046\\
$\text{NH}_4^+$ & 1.298 & 1.251 & 1.115 & 1.255 & 1.236 & 1.143\\
Phe & -0.095 & -0.084 & -0.070 & -0.091 & -0.092 & -0.068\\
Pro & -0.108 & -0.119 & -0.086 & -0.122 & -0.133 & -0.098\\
Ser & -0.087 & -0.078 & -0.071 & -0.087 & -0.101 & -0.049\\
Thr & -0.119 & -0.120 & -0.085 & -0.110 & -0.093 & -0.130\\
Trp & -0.024 & -0.023 & -0.022 & -0.022 & -0.026 & -0.029\\
Tyr & -0.092 & -0.079 & -0.060 & -0.086 & -0.069 & -0.074\\
Val & -0.163 & -0.153 & -0.122 & -0.156 & -0.165 & -0.127\\
\end{tabular}
\caption{External fluxes based on the cell specific rates of consumption and production of a CHO cell line cultivated in pseudo-perfusion mode. 
The fluxes presented for each metabolite correspond to the final six days of the cultivation.
The unit of the fluxes is $\text{pmol}\cdot \text{cell}^{-1} \cdot \text{day}^{-1}$, except for Biomass for which the unit is $\text{day}^{-1}$.}
\label{tb:data}
\end{table*}
CHO cells were cultivated using pseudo-perfusion mode in spin tube bioreactors in a chemically defined medium over a time period of 11 days. The working volume was 10 mL. The medium was completely renewed on a daily basis, and the cells were re-seeded at $2 \cdot 10^6$ cells/mL to imitate steady-state conditions. The concentrations of $23$ external metabolites along with the cell number were analyzed before and after each medium exchange. Cell number, glucose, lactate, glutamine, glutamate, and $\text{NH}^+_4$ were analyzed using a BioProfile FLEX analyzer (Nova Biomedical, USA). Other amino acids were analyzed using Waters AccQ Tag Reagent Kit (Waters, USA) and high-performance liquid chromatography. Data from the final six days were used to calculate the cell specific growth rate along with the cell specific rates of consumption and production between each medium renewal. The resulting data set of external fluxes characterizing the metabolic state of the cells are presented in Table \ref{tb:data}.
\subsection{Metabolic Networks}
Two networks of varied complexity were used:
\begin{description}
\item[Network 1:] 36 reactions, (whereof 7 reversible), 31 metabolites (whereof 22 external), 
\item[Network 2:] 100 reactions, (whereof 29 reversible), 96 metabolites (whereof 24 external).
\end{description}
Network 1 was constructed based on information available in the online KEGG database \citep{kanehisa2000,kanehisa2012}, biochemistry literature \citep{Nelson2005}, and published metabolic networks of mammalian and CHO cell metabolism, see \eg 
\citet{Altamirano2001,Zamorano2010,Goudar2010,Ahn2011}.
The network consists of reactions from the glycolysis, the citric acid cycle, amino acid metabolism, and the synthesis of biomass from individual metabolites.
Unbalanced energy metabolites, which are involved in many reactions, were not considered. Furthermore, the metabolite $\text{CO}_2$ was not considered in this network representation. 

Network 2 was obtained from literature \citep[Section 2.2]{ZamoranoRiveros2012}. The network in its original format consists of 100 reactions and is one of the more detailed metabolic network representations of CHO cell metabolism available to date. 
The network consists of reactions from the glycolysis, the citric acid cycle, amino acid metabolism, the pentose phosphate pathway, the urea cycle, and lipid metabolism, as well as reactions representing the synthesis of nucleotides, fatty acids, proteins, and biomass. 
Reactions representing the transport of external metabolites between the internal and external space are also included.
The network was adapted for the present work by including transport reactions for additional external fluxes available in the data set. As for Network 1, unbalanced energy metabolites were not considered; the metabolite $\text{CO}_2$ was however kept. In the original network, 19 out of 100 reactions are defined as reversible. For the present work, 29 out of 100 reactions were defined as reversible, based on knowledge of mammalian cell metabolism.

Network 1 was constructed so that Metatool \citep{VonKamp2006} could enumerate the whole set of EFMs prior to solving the EFMs-based MFA problem. In contrast, the high complexity of Network 2 rendered Metatool unable to enumerate the whole set of EFMs. The metabolites arginine and histidine were absent in Network 1. Hence fewer measurements ($Q$) were available for solving the EFMs-based MFA problem compared to Network 2. 
\subsection{Results}
The master \eqref{eq:MP} and subproblem \eqref{eq:SP} combination was implemented and solved in {\scshape matlab}, version 2013a. The solver was {\scshape cplex} 10.2 ({\scshape ilog}, Sunnyvale, California), linked to {\scshape matlab} through an interface. The implementation was run on a computer with 64-bit Linux, a single Intel Xeon 3GHz processor core with hyper-threading disabled and 32GB of memory. 
It should be noted that the method was not implemented with run time efficiency as a priority.
\subsubsection{Results for Network 1}
The master and subproblem combination identified 14 EFMs needed to describe the data. The EFMs-based MFA problem was solved and the EFMs found in around 7 seconds. In comparison the total number of EFMs enumerated by Metatool was 257. The total run time of enumeration followed by solving the EFMs-based MFA problem was around 2 seconds. 
During the master and subproblem iterations, one supplementary EFM that was found but then disregarded since it did not contribute to the optimal solution. The macroscopic reactions of each EFM and the corresponding macroscopic fluxes are shown in Table \ref{tb:N1}. Both methods achieved the same optimal objective value of 2.54, \ie the 2-norm of the residual, $\norm{Q-\mathcal{I}A_xE_Bw_B}_2$. The optimal solution vectors were however different, \ie Metatool did not give the same weight $w$ on each macroscopic reaction as the solution found through the master and subproblem combination.
\begin{table}[htb]
\centering
\scriptsize
\rowcolors{1}{white}{kthgrey}
\begin{tabular}{c|p{0.65\linewidth}|r}
\bf{EFM:} & \bf{Macroscopic Reaction} & $w$ \\ \hline
1 & 0.5 Glc + 1 $\text{NH}_4^+$ $\Rightarrow$ 1 Ser & 6.82\\
2 & 1 Ser $\Rightarrow$ 1 Lac + 1 $\text{NH}_4^+$ & 5.99\\
3 & 1 Asp + 1 Gln $\Leftrightarrow$ 1 Asn + 1 Glu & 1.67\\
4 & 1 Asn $\Leftrightarrow$ 1 Asp + 1 $\text{NH}_4^+$ & 1.50\\
5 & 1 Glu $\Rightarrow$ 1 Ala & 0.83\\
6 & 1 Ser $\Leftrightarrow$ 1 Gly & 0.69\\
7 & 0.0208 Glc + 0.026 Asp + 0.0004 Cys + 0.0165 Gly + 0.0084 Ile + 0.0133 Leu + 0.0101 Lys + 0.0033 Met + 0.0055 Phe + 0.0081 Pro + 0.0099 Ser + 0.008 Thr + 0.004 Trp + 0.0077 Tyr + 0.0096 Val + 0.0006 Glu + 0.0133 Ala + 0.0377 Gln $\Rightarrow$ 1 Biomass & 0.64\\
8 & 1 Gly $\Rightarrow$ 1 $\text{NH}_4^+$ & 0.42\\
9 & 1 Glu $\Rightarrow$ 1 Asp & 0.41\\
10 & 1 Ala $\Rightarrow$ 1 Lac + 1 $\text{NH}_4^+$ & 0.24\\
11 & 1 Cys $\Rightarrow$ 1 Lac & 0.12\\
12 & 1 Met + 1 Ser $\Rightarrow$ 1 Lac + 1 Cys + 1 $\text{NH}_4^+$ & 0.04\\
13 & 2/3 Asp + 1/3 Leu + 1/3 Met + 1/3 Ser $\Rightarrow$ 1/3 Cys + 1 Glu + 1/3 $\text{NH}_4^+$ & 0.02\\
14 & 1 Phe $\Rightarrow$ 1 Tyr & 0.004\\
\end{tabular}
\caption{Results for Network 1, macroscopic reactions corresponding to each EFM needed for EFMs-based MFA, ordered by size of the macroscopic flux ($w$) in the unit $\text{pmol}\cdot \text{cell}^{-1} \cdot \text{day}^{-1}$}
\label{tb:N1}
\end{table}
\subsubsection{Results for Network 2}
The master and subproblem combination identified 25 EFMs needed to describe the data. The EFMs-based MFA problem was solved and the EFMs found in around 7 seconds. All EFMs found during the iterations contributed to the optimal solution. 
Metatool spent around 1 second trying to enumerate all EFMs but then returned an error message indicating that the network was too complex to be handled. The macroscopic reactions and the corresponding macroscopic fluxes are shown in Table \ref{tb:N3}. The solution gave an optimal objective value of 1.9, \ie the  2-norm of the residual, $\norm{Q-\mathcal{I}A_xE_Bw_B}_2$.
\begin{table*}[hbt]
\scriptsize
\centering
\rowcolors{1}{white}{kthgrey}
\begin{tabular}{c|p{0.75\linewidth}|r}
\bf{EFM:} & \bf{Macroscopic Reaction} & $w$ \\ \hline
1 & 0.5 Glu $\Rightarrow$ 0.5 Ala + 1 CO2 & 6.97\\
2 & 0.5 Glc $\Rightarrow$ 1 Lac & 4.86\\
3 & 1 Ala + 1 CO2 $\Rightarrow$ 1 Asp & 2.64\\
4 & 0.5 Glc + 0.5 Asn + 0.5 Ala $\Rightarrow$ 1 Ser + 0.5 Glu + 1 CO2 & 1.95\\
5 & 1 Asn $\Rightarrow$ 1 Lac & 1.65\\
6 & 1 Ser $\Leftrightarrow$ 1 Gly & 1.56\\
7 & 1 Gln + 1 Asp $\Rightarrow$ 1 Asn + 1 Glu & 1.41\\
8 & 1 Asp + 1 Gly $\Rightarrow$ 1 Asn + 1 CO2 & 0.94\\
9 & 1 Gln $\Rightarrow$ 1 Glu + 1 $\text{NH}_4^+$ & 0.56\\
10 & 1 Lac + 1 Gly $\Rightarrow$ 1 Ala + 1 CO2 & 0.51\\
11 & 0.028217 Glc + 0.0656 Gln + 0.0686 Ser + 0.0468 Asn + 0.046 Asp + 0.046 Arg + 0.0276 Tyr + 0.1296 Thr + 0.0552 Lys + 0.0644 Val + 0.046 Ile + 0.2348 Leu + 0.0368 Phe + 0.0184 Met + 0.1364 Lac + 0.0264 Glu + 0.0184 Cys + 0.0184 His + 0.046 Pro + 0.0092 Trp + 0.006 Ethanolamine + 0.0171 Choline $\Rightarrow$ 0.0628 Ala + 0.0212 CO2 + 1 Biomass & 0.45\\
12 & 1 Ser $\Rightarrow$ 1 Lac + 1 $\text{NH}_4^+$ & 0.39\\
13 & 1 Arg $\Rightarrow$ 1 Glu + 1 $\text{NH}_4^+$ & 0.21\\
14 & 2/3 Asp + 1/3 Leu + 1/3 Lac $\Rightarrow$ 1 Glu + 2/3 CO2 & 0.18\\
15 & 2/3 Val $\Rightarrow$ 1/3 Asn + 1/3 Lac + 1 CO2 & 0.16\\
16 & 1/3 Thr + 1/3 Phe $\Rightarrow$ 2/3 Glu + 1 CO2 & 0.13\\
17 & 1/3 Ser + 1/3 Tyr + 1/3 Met $\Rightarrow$ 2/3 Glu + 1 CO2 + 1/3 Cys & 0.11\\
18 & 1 Asp + 1 Cys $\Rightarrow$ 1 Asn + 1 Lac & 0.10\\
19 & 1 Pro $\Rightarrow$ 1 Glu & 0.08\\
20 & 0.025329 Glc + 0.058887 Gln + 0.06158 Ser + 0.033034 Asn + 0.064273 Asp + 0.041293 Arg + 0.049551 Thr + 0.065889 Lys + 0.05781 Val + 0.041293 Ile + 0.074327 Leu + 0.05781 Phe + 0.016517 Met + 0.0096948 Lac + 0.066786 Gly + 0.016517 Cys + 0.016517 His + 0.041293 Pro + 0.24183 Trp + 0.005386 Ethanolamine + 0.01535 Choline $\Rightarrow$ 0.1675 Ala + 1 CO2 + 0.89767 Biomass & 0.08\\
21 & 1 Ile $\Rightarrow$ 1 Glu + 1 CO2 & 0.08\\
22 & 0.024997 Glc + 0.058115 Gln + 0.060773 Ser + 0.05528 Asn + 0.040751 Asp + 0.040751 Arg + 0.50815 Tyr + 0.048901 Thr + 0.048901 Lys + 0.057052 Val + 0.040751 Ile + 0.073352 Leu + 0.032601 Phe + 0.0163 Met + 0.065202 Ala + 0.065911 Gly + 0.0163 Cys + 0.0163 His + 0.040751 Pro + 0.0081502 Trp + 0.0053154 Ethanolamine + 0.015149 Choline $\Rightarrow$ 0.47413 Glu + 1 CO2 + 0.8859 Biomass & 0.05\\
23 & 1 His $\Rightarrow$ 1 Glu + 1 $\text{NH}_4^+$ & 0.05\\
24 & 0.028217 Glc + 0.0656 Gln + 0.0686 Ser + 0.0368 Asn + 0.046 Arg + 0.0552 Thr + 0.0552 Lys + 0.0644 Val + 0.046 Ile + 0.0828 Leu + 0.4396 Phe + 0.0184 Met + 0.0744 Gly + 0.0184 Cys + 0.0184 His + 0.046 Pro + 0.0092 Trp + 0.006 Ethanolamine + 0.0171 Choline $\Rightarrow$ 0.0256 Lac + 0.1936 Glu + 0.7156 CO2 + 1 Biomass & 0.04\\
25 & 0.5 Lys + 1 Lac $\Rightarrow$ 1 Glu + 1 CO2 & 0.04\\
\end{tabular}
\caption{Results for Network 2, macroscopic reactions corresponding to each EFM needed for EFMs-based MFA, ordered by size of the macroscopic flux ($w$) in the unit $\text{pmol}\cdot \text{cell}^{-1} \cdot \text{day}^{-1}$}
\label{tb:N3}
\end{table*}
\subsubsection{Data Fitting Performance}
The fit for each external metabolite was quantified and is presented in Figure \ref{fig:ResExt} in the following way. 
For each metabolite the 2-norm of the residual of all measurement was examined. 
Specifically let $Q_{mb}$ be the vector of measurements and $A_{x,mb}$ the rows of the stoichiometric matrix corresponding to a specific metabolite. Then Figure \ref{fig:ResExt} shows $\norm{Q_{mb}-\mathcal{I}_{mb}A_{x,mb}E_Bw_B}_2$ for each metabolite and network.
\begin{figure}[htb]
\centering
\includegraphics[width=\linewidth]{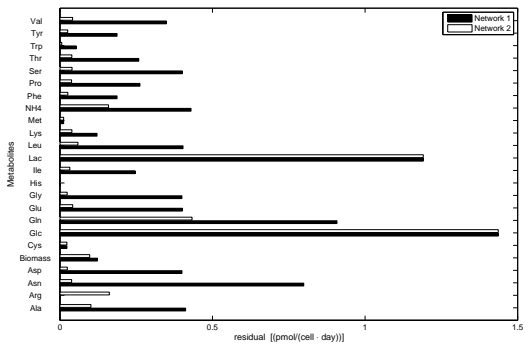}
\caption{The 2-norm of the residual for each external metabolite in Network 1 and 2}
\label{fig:ResExt}
\end{figure}
The comparison in Figure \ref{fig:ResExt} shows that several external metabolites were fitted better by Network 2, while a few remain at the same level as with Network 1. 
\section{Conclusion} \label{sec:conc}
We have presented a method that solves the EFMs-based MFA problem without enumerating the whole set of EFMs.
The method was derived using column generation. This technique is well established in other fields; however, its application to EFMs-based MFA is to the best of our knowledge new. 
Through the use of this technique, a smaller set of EFMs is generated while still ensuring convergence of the EFMs-based MFA problem to an optimal solution, even for a complex network. In other words, the method returns the same optimal objective value that would have been achieved if the whole set of EFMs was known. This enables more complex networks to be used when analyzing the cell metabolism. 

It should be noted that the subset of EFMs obtained through our method is not necessarily unique. Hence, another subset of EFMs could be found that fits the data in an equally good way. Nevertheless, obtaining a small set of EFMs relevant to external measurements based on a complex network can provide an indication of what pathways are important. 
To obtain a unique and globally optimal solution 
a regularized version of the EFMs-based MFA problem could be solved, \ie $ \frac{1}{2} \norm{Q-\mathcal{I}A_{x}Ew}_2 +\frac{1}{2} \xi\norm{w}_2 $ where $\xi$ is some small enough positive number. However, the regularization of $w$ with respect to the 2-norm will predispose the solution to include 
a higher number of EFMs, each with lower flux ($w_l$). 
In modeling applications, a smaller set of relevant EFMs may be more desirable since it could mean that fewer parameters need to be estimated.
Furthermore, the unique solution obtained through the regularization does not necessarily imply that solution to be more relevant than other solutions of the EFMs-based MFA problem.

There exist diverse tools to find EFMs. We chose to compare the proposed column generation method to Metatool, an EFM enumeration tool that has previously been used to solve the EFMs based MFA problem \citep{Provost2006}. The larger network was thus selected as complex enough so that the comparison tool would fail. While other EFM enumeration tool exist, \eg \citet{Terzer2008}, any such tool will become infeasible with increased network complexity. 
Consequently, focus has shifted from total enumeration of EFMs for complex networks to extracting a subset with certain desired properties. 
The LP search algorithm methods presented by \citet{Kaleta2009} and later by \citet{Tabe-Bordbar2013}, are appropriate when the goal is to, \eg identify as many EFMs as possible. However, the subset extracted by the LP search algorithm methods is not guaranteed to give a globally optimal solution to the EFMs-based MFA problem.
In contrast the method presented by \citet{Jungers2011} was specifically developed for solving the EFMs-based MFA problem. It generates a subset of EFMs through a series of optimization problems using a pre-calculated flux vector $v$ as a guide. 
Compared to \citet{Jungers2011} the column generation method does not require the pre-calculation of the flux vector $v$ since external measurements are taken directly into account. 
Additionally, we can find solutions with fewer EFMs than the predetermined minimal number found by \citet{Jungers2011}.
Finally, compared to the previously mentioned methods our method presented a more integrated approach, where identifying EFMs is done in conjunction with solving the EFMs-based MFA problem.

In the case-study, small subsets of EFMs and corresponding macroscopic fluxes were obtained for both networks at a low computational cost. 
The case study implies that the method is effective, since there was only a single unnecessary iteration for Network 1 and no unnecessary iteration for Network 2. This indicates that the EFMs-based MFA problem, with Network 2, was solved to optimality through solving 26 LP's and 25 quadratic optimization problems.
Even though the networks have many EFMs only a fraction of those were needed to fit the data in an optimal way. 
Our method allowed the use of a network of higher complexity, which improved the fit between estimated fluxes and data as the norm for several external fluxes was greatly reduced. This result highlights the value of being able to keep a certain level of network complexity while avoiding simplifications forced by computational limitations. 

Our approach has the potential to find relevant EFMs of even larger networks than is presented in this work, thereby increasing the size of problems that can be handled by EFMs-based MFA.
Furthermore, we believe that the strategy of generating EFMs using column generation has many possible applications. 
One example is the identification of a subset of EFMs and parameters of a dynamic model that best fits experimental data.
Another example could be to develop strategies that take measurement errors more specifically into account than through least-squares approximation.
\renewcommand{\abstractname}{Acknowledgements}
\begin{abstract}
The work of the authors from the Department of Mathematics was supported by the Swedish Research Council. The work of the authors from the Division of Industrial Biotechnology was supported by KTH and the Swedish Governmental Agency for Innovation Systems (VINNOVA). The CHO cell line was kindly provided by Selexis (Switzerland). Culture media were kindly provided by Irvine Scientific (CA, USA). 
Finally, we thank the editor and the two anonymous referees for their valuable comments and suggestions. 
\end{abstract}

\bibliographystyle{natbib}

\end{document}